\title{\bf On normal subgroups of an amalgamated product of groups with applications to knot theory}
\author{John G. Ratcliffe\\ 
Mathematics Department, Vanderbilt University, \\
Nashville TN 37240, USA \\ \\
In memory of John Stallings}
\newtheorem{theorem}{Theorem}[section]
\newtheorem{corollary}[theorem]{Corollary}
\newenvironment{example}{\vspace{0in}{\noindent\bf Example:\ }}{}
\newenvironment{remark}{\vspace{0in}{\noindent\bf Remark: }}{}
\newenvironment{proof}{{\bf Proof:\ }}{\hfill$\square$\vspace{.2in}}
\newcommand{\integers}{{\mathbb Z}}
\date{}
\begin{document}
\maketitle

\noindent {\bf Abstract:} In this paper, useful necessary and sufficient conditions for 
a normal subgroup of an amalgamated product of groups to be finitely generated are given. 
These conditions are applied together with Stallings' fibering theorem to give a simple proof 
that an irreducible multilink in a homology 3-sphere fibers if and only if each of its multilink splice components fibers. 

\section{Introduction} 

In a recent paper \cite{H-M-S}, M. Hirasawa, K. Murasugi, and D. Silver gave useful necessary and sufficient conditions for a satellite knot in $S^3$ to fiber.  Their proof used J. Stallings' fibering theorem \cite{S} 
together with a group theoretical argument about an amalgamated product of groups. 
Hirasawa et al.  remarked in their paper that their theorem follows from Theorem 4.2 of the monograph 
\cite{E-N} of D. Eisenbud and W. Neumann, which states that a multilink in a homology 3-sphere is fibered if and only if it is irreducible and each of its multilink splice components is fibered.  Eisenbud and Neumann 
used the theory of foliations to prove their fibering theorem, and they remarked that their fibering theorem could also be proved using a group theoretic argument and Stallings' fibering theorem. 

The original motivation for this paper was to give a group theoretical argument sufficient to prove Theorem 4.2 
of \cite{E-N} using Stallings' fibering theorem.  In the process, we discovered some useful necessary and sufficient conditions for a normal subgroup of an amalgamated product of groups to be finitely generated.  
These conditions, listed in Theorem 2.1, and their HNN versions, listed in Theorem 2.2,  are the main results of this paper. 
Our main results can be applied to simplify the proofs of theorems of Eisenbud and Neumann \cite{E-N}, 
Moon \cite{M}, \cite{M2}, and Sykiotis \cite{Sykiotis} as explained below. 
Some related theorems concerning free normal subgroups, 
one-relator groups, and cable knots are also proved, which may be of independent interest. 

\section{Finitely generated normal subgroups} 

We begin by considering conditions for a normal subgroup of an amalgamated product of groups 
to be finitely generated.  An amalgamated product of groups $A\ast_C B$ is said to be {\it nontrivial} 
if and only if $A\neq C\neq B$. 

\begin{theorem}  
Let $N$ be a normal subgroup of a group $G$ with a nontrivial amalgamated product decomposition 
$G=A\ast_CB$.  
If $N$ is finitely generated and $N\not\subseteq C$, then $NC$ is of finite index in $G$, 
and if  moreover $N\cap C$ is finitely generated, then $N\cap A$ and $N\cap B$ are finitely generated. 
Conversely, if $NC$ is of finite index in $G$, then $N\not\subseteq C$, 
and if moreover $N\cap A$ and $N\cap B$ are finitely generated, then $N$ is finitely generated. 
If $NC$ is of finite index in $G$, 
then $G/N$ is finite if and only if $C/N\cap C$ is finite. 
\end{theorem}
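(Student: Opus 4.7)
The plan is to analyze $N$ through its action on the Bass-Serre tree $T$ of $G=A\ast_CB$, whose vertices are the cosets $gA,gB$ (with stabilizers the $G$-conjugates of $A,B$) and whose edges are the cosets $gC$ (with stabilizers the conjugates of $C$). Nontriviality of the amalgamation makes $T$ infinite and the $G$-action on $T$ minimal. The pivotal first translation is that, since $N$ is normal in $G$ and $A\neq C\neq B$, the condition $N\subseteq C$ is equivalent to $N$ fixing some (equivalently, every) vertex of $T$: fixing a type-$A$ vertex forces $N\subseteq A$ by normality, and then conjugating by any $b\in B\setminus C$ together with the normal-form theorem in $A\ast_CB$ forces $N\subseteq C$.

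For the forward direction, assume $N$ is finitely generated and $N\not\subseteq C$. Then $N$ acts on $T$ without a fixed vertex, so by Bass-Serre theory $N$ admits a unique minimal invariant subtree $T_N$ with $T_N/N$ a finite graph. Normality of $N$ in $G$ makes every $G$-translate $gT_N$ an $N$-invariant subtree, and minimality forces $gT_N=T_N$; thus $T_N$ is $G$-invariant, and minimality of the $G$-action on $T$ gives $T_N=T$. So $T/N$ is a finite graph, and the identity $N\backslash G/H=G/NH$ (valid because $N$ is normal) converts the orbit counts into $[G:NC],[G:NA],[G:NB]<\infty$. The induced graph-of-groups decomposition of $N$ then has edge groups $N\cap gCg^{-1}=g(N\cap C)g^{-1}$ and vertex groups $g(N\cap A)g^{-1},g(N\cap B)g^{-1}$; if $N\cap C$ is finitely generated, all edge groups are finitely generated, and the standard Bass-Serre fact that a finitely generated fundamental group of a finite graph of groups with finitely generated edge groups has finitely generated vertex groups yields that $N\cap A$ and $N\cap B$ are finitely generated.

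The converse runs the same machinery in reverse. If $[G:NC]<\infty$ but $N\subseteq C$, then $NC=C$ would have finite index in $G$, contradicting the infiniteness of $T$ in a nontrivial amalgamation; hence $N\not\subseteq C$. If moreover $N\cap A$ and $N\cap B$ are finitely generated, then all vertex groups in the finite graph-of-groups decomposition of $N$ are finitely generated, so $N$ itself is finitely generated. Finally, the second isomorphism theorem yields $NC/N\cong C/(N\cap C)$, so $[G:N]=[G:NC]\cdot[C:N\cap C]$; once $[G:NC]<\infty$, the finiteness of $[G:N]$ is equivalent to the finiteness of $[C:N\cap C]$.

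The substantive input is Bass-Serre theoretic and is used twice: that a finitely generated group acting on a tree without a fixed vertex has a cocompact (finite-quotient) minimal invariant subtree, and that the finitely generated fundamental group of a finite graph of groups with finitely generated edge groups has finitely generated vertex groups. Both are classical; the remaining work is careful coset bookkeeping and tracking which hypothesis (nontriviality of the amalgamation, normality of $N$, finite generation of $N$, finite generation of $N\cap C$) drives each step of the argument.
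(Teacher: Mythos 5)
Your proof is correct and follows essentially the same route as the paper: the Bass--Serre graph-of-groups decomposition of $N$ induced by the splitting, finiteness of $T/N$ to get $[G:NC]<\infty$, the standard fact (Cohen's Lemma 2 in the paper) that a finitely generated fundamental group of a finite graph of groups with finitely generated edge groups has finitely generated vertex groups, and the second isomorphism theorem for the last claim. The only substantive difference is that you supply a self-contained argument (minimal invariant subtree, made $G$-invariant by normality, hence equal to $T$) for the step the paper outsources to Theorem 10 of Karrass--Solitar, which is precisely the tree-action proof the paper attributes to Bridson and Howie.
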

\begin{proof}
By the subgroup theorem of Karrass and Solitar \cite{K-S} for amalgamated products 
or Theorem 3.14 of P. Scott and C.T.C  Wall \cite{S-W},  the group 
$N$ is the  product of a graph $\Gamma$ of subgroups described as follows.  
There are sets of representatives $\{g_\alpha\}, \{g_\beta\}, \{g_\gamma\}$ 
for the $NA$, $NB$, $NC$ cosets, respectively, such that the representative of $NA$, $NB$, and $NC$ is 1. 
Moreover, the vertices of $\Gamma$ are the cosets $g_\alpha NA$ and $g_\beta NB$, and the corresponding groups are $N\cap g_\alpha A g_\alpha^{-1}$ and $N\cap g_\beta Bg_\beta^{-1}$.  The edges of $\Gamma$ are the cosets $g_\gamma NC$, and the corresponding groups are $N\cap g_\gamma Cg_\gamma^{-1}$. 
The two ends of the edge $g_\gamma NC$ are the vertices $g_\gamma NA$ and $g_\gamma NB$. 
If $g_\gamma NA = g_\alpha NA$, then $g_\gamma A = g_\alpha A$ and if $g_\gamma NB = g_\beta NB$, 
then $g_\gamma B =  g_\beta B$. 

Suppose $N$ is finitely generated and $N\not\subseteq C$. 
Then $NC$ is of finite index in $G$ by Theorem 10 of Karrass and Solitar \cite{K-S}.  
For a proof based on a group actions on trees, see Proposition 2.2 of Bridson and Howie \cite{B-H}.  
Hence $NA$ and $NB$ are of finite index in $G$. 
Therefore $\Gamma$ is a finite graph. 

Suppose moreover that $N\cap C$ is finitely generated
As $N\cap g_\gamma C g_\gamma^{-1} = g_\gamma (N\cap C) g_\gamma^{-1}$, 
all the edge groups of $\Gamma$ are finitely generated.  
By Lemma 2 of Cohen \cite{C}, each vertex group of $\Gamma$ is finitely generated, 
and so $N\cap A$ and $N\cap B$ are finitely generated. 

Conversely, suppose that $NC$ is of finite index in $G$. 
As $C$ is of infinite index in $G$, we deduce that $N\not\subseteq C$. 
Suppose moreover that $N\cap A$ and $N\cap B$ are finitely generated. 
Then $\Gamma$ is a finite graph as before. 
As $N\cap g_\alpha A g_\alpha^{-1} = g_\alpha (N\cap A) g_\alpha^{-1}$ and 
$N\cap g_\beta B g_\beta^{-1} = g_\beta (N\cap B) g_\beta^{-1}$, all the vertex groups of $\Gamma$ 
are finitely generated.  
Therefore $N$ is finitely generated by Theorem 5 of \cite{K-S}. 

Finally, suppose $NC$ is of finite index in $G$. 
As $NC/N \cong C/N\cap C$, we have that $G/N$ is finite if and only if $C/N\cap C$ is finite. 
\end{proof}

We next consider an HNN version of Theorem 2.1. 

\begin{theorem}  
Let $N$ be a normal subgroup of a group $G$ with an HNN decomposition 
$G=A\ast_C = \langle A,t : tCt^{-1} = D\rangle$.  
If $N$ is finitely generated and $N\not\subseteq C$, then $NC$ is of finite index in $G$, 
and if  moreover $N\cap C$ is finitely generated, then $N\cap A$ is finitely generated. 
Conversely, if $NC$ is of finite index in $G$, then $N\not\subseteq C$, 
and if moreover $N\cap A$ is finitely generated, then $N$ is finitely generated. 
If $NC$ is of finite index in $G$, 
then $G/N$ is finite if and only if $C/N\cap C$ is finite. 
\end{theorem}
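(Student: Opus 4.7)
The plan is to mirror the proof of Theorem 2.1, replacing the Karrass--Solitar decomposition for amalgamated products with its HNN analogue. Apply the Karrass--Solitar subgroup theorem for HNN extensions (or Theorem 3.14 of Scott--Wall) to write $N$ as the fundamental group of a graph of groups $\Gamma$. The vertices of $\Gamma$ are indexed by a set of double coset representatives $\{g_\alpha\}$ for $N\backslash G/A$, with vertex groups $N\cap g_\alpha A g_\alpha^{-1}$, and the edges are indexed by representatives $\{g_\gamma\}$ for $N\backslash G/C$, with edge groups $N\cap g_\gamma C g_\gamma^{-1}$; the two endpoints of the edge $g_\gamma NC$ are $g_\gamma NA$ (via the inclusion $C\hookrightarrow A$) and $g_\gamma t NA$ (via $tCt^{-1}=D\hookrightarrow A$). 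Normality of $N$ gives $N\cap g_\gamma Cg_\gamma^{-1}=g_\gamma (N\cap C)g_\gamma^{-1}$ and similarly for the vertex groups.

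For the forward implication, assume $N$ is finitely generated with $N\not\subseteq C$. Invoke the HNN analogue of Theorem 10 of Karrass--Solitar (equivalently, the Bass--Serre/tree-action argument of Bridson--Howie, Proposition 2.2, which applies uniformly to amalgams and HNN extensions) to conclude that $NC$ has finite index in $G$. Then $NA\supseteq NC$ also has finite index, so $\Gamma$ has finitely many vertices and edges. If in addition $N\cap C$ is finitely generated, all edge groups are conjugate to $N\cap C$ and hence finitely generated, and Lemma 2 of Cohen then forces each vertex group of the finite graph of groups to be finitely generated; in particular $N\cap A$ is finitely generated.

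For the converse, suppose $NC$ has finite index in $G$. Since $C$ has infinite index in $G$ (any vertex stabilizer of the Bass--Serre tree of a nontrivial HNN extension has infinite index, and $C\subseteq A$), we must have $N\not\subseteq C$. If moreover $N\cap A$ is finitely generated, the identity $N\cap g_\alpha Ag_\alpha^{-1}=g_\alpha(N\cap A)g_\alpha^{-1}$ shows every vertex group is finitely generated, and $\Gamma$ is a finite graph as above, so $N$ is finitely generated by Theorem 5 of \cite{K-S} (its HNN version). Finally, under the assumption that $NC$ has finite index in $G$, the identity $NC/N\cong C/(N\cap C)$ together with $[G:N]=[G:NC]\cdot[NC:N]$ gives that $G/N$ is finite if and only if $C/(N\cap C)$ is finite.

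The main obstacle is not a new idea but rather locating and citing the correct HNN-analogues of the two ingredients used in the amalgamated case: the implication \emph{$N$ finitely generated and $N\not\subseteq C$ $\Rightarrow$ $NC$ of finite index}, and the structural subgroup theorem that exhibits $N$ as a graph-of-groups fundamental group with the coset description above. The tree-action proof in Bridson--Howie handles the first, and the Karrass--Solitar or Scott--Wall machinery handles the second; once these are in hand, everything else is a formal transcription of the proof of Theorem 2.1.
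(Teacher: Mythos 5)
Your proposal is correct and follows the same overall strategy as the paper: present $N$ as the fundamental group of a graph of groups via the HNN subgroup theorem (the paper cites Karrass--Pietrowski--Solitar \cite{K-P-S} or Scott--Wall \cite{S-W}), use normality to identify all edge and vertex groups with conjugates of $N\cap C$ and $N\cap A$, apply Lemma 2 of Cohen \cite{C} to pass finite generation from edge groups to vertex groups, and use $NC/N\cong C/(N\cap C)$ for the last claim. The one substantive divergence is the key finiteness step in the forward direction: you get $[G:NC]<\infty$ by invoking ``the HNN analogue of Theorem 10 of Karrass--Solitar'' or Proposition 2.2 of Bridson--Howie \cite{B-H}, whereas the paper cites \cite{B-H} only in the amalgamated case and here argues differently --- Theorem 9 of \cite{K-S2} gives $[G:NA]<\infty$, so $\Gamma$ has finitely many vertices; Theorem 1 of \cite{K-S} (using that $N$ is finitely generated) gives a spanning tree of $\Gamma$ with only finitely many edges outside it, so $\Gamma$ is finite and $[G:ND]<\infty$; and $ND=t(NC)t^{-1}$ transfers this to $NC$. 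Your route is sound in principle, since $N\not\subseteq C$ together with normality forces $N$ to act on the Bass--Serre tree without a fixed point, but it presupposes a citation you have not actually located; the paper's detour through $NA$ and the spanning tree is precisely how it avoids needing one. Two cosmetic points: the paper indexes edges by $ND$-cosets with edge groups $N\cap g_\gamma Dg_\gamma^{-1}=g_\gamma t(N\cap C)t^{-1}g_\gamma^{-1}$ rather than by $NC$-cosets (under your $C$-labelled convention the second endpoint of $g_\gamma NC$ would be $g_\gamma t^{-1}NA$, not $g_\gamma tNA$), and the converse direction is concluded from Theorem 1 of \cite{K-S2}, which is the precise reference you were reaching for with ``the HNN version of Theorem 5 of \cite{K-S}.''
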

\begin{proof}
By the subgroup theorem of Karrass, Pietrowski, and Solitar \cite{K-P-S} for HNN groups 
or Theorem 3.14 of P. Scott and C.T.C  Wall \cite{S-W}, 
the group $N$ is the product of a graph $\Gamma$ of subgroups described as follows.  
There are sets of representatives $\{g_\alpha\}$ and $\{g_\gamma\}$ 
for the $NA$ and $ND$ cosets, respectively, such that the representative of $NA$ and $ND$ is 1. 
Moreover, the vertices of $\Gamma$ are the cosets $g_\alpha NA$, and the corresponding groups are 
$N\cap g_\alpha A g_\alpha^{-1}$.  The edges of $\Gamma$ are the cosets $g_\gamma ND$, and the corresponding groups are $N\cap g_\gamma Dg_\gamma^{-1}$. 
The two ends of the edge $g_\gamma ND$ are the vertices $g_\gamma NA$ and $g_\gamma tNA$. 
If $g_\gamma NA = g_\alpha NA$, then $g_\gamma A = g_\alpha A$ 
and if $g_\gamma t NA = g_\beta NA$, then $g_\gamma tA=  g_\beta A$. 

Suppose $N$ is finitely generated and $N\not\subseteq C$. 
Then $NA$ is of finite index in $G$ by Theorem 9 of Karrass and Solitar \cite{K-S2}. 
Hence $\Gamma$ has finitely many vertices. 
By Theorem 1 of \cite{K-S}, the graph $\Gamma$ has a spanning tree  ${\rm T }$ such that there are only 
finitely many edges of $\Gamma$ not in ${\rm T }$. 
Hence $\Gamma$ is a finite graph. 
Therefore $ND$ is of finite index in $G$. 
As $ND = t(NC)t^{-1}$, we have that $NC$ is of finite index in $G$. 

Suppose moreover that $N\cap C$ is finitely generated. 
As $N\cap g_\gamma D g_\gamma^{-1} = g_\gamma t (N\cap C)t^{-1} g_\gamma^{-1}$, 
all the edge groups of $\Gamma$ are finitely generated.  
By Lemma 2 of Cohen \cite{C}, each vertex group of $\Gamma$ is finitely generated, 
and so $N\cap A$ is finitely generated. 

Conversely, suppose that $NC$ is of finite index in $G$. 
As $C$ is of infinite index in $G$, we deduce that $N\not\subseteq C$. 
Now $ND$ and $NA$ are of finite index in $G$. 
Hence $\Gamma$ is a finite graph. 
Suppose moreover that $N\cap A$ is finitely generated. 
As $N\cap g_\alpha A g_\alpha^{-1} = g_\alpha (N\cap A) g_\alpha^{-1}$,  
all the vertex groups of $\Gamma$ are finitely generated.  
Therefore $N$ is finitely generated by Theorem 1 of \cite{K-S2}. 

Finally, suppose $NC$ is of finite index in $G$. 
As $NC/N \cong C/N\cap C$, we have that $G/N$ is finite if and only if $C/N\cap C$ is finite. 
\end{proof}

As an application of Theorems 2.1 and 2.2 to group theory, 
we give a short proof of the main result of M. Moon \cite{M}. 

\begin{theorem} {\rm  (M. Moon)} 
Let $G = A\ast _C$ (or $G = A\ast_C B$), where $C$ is a free abelian group of finite rank.  
Suppose that $A$ (and $B$) contain no finitely generated nontrivial normal subgroup of 
infinite index.  If $N$ is a finitely generated normal subgroup of $G$ with $N\not\subseteq C$, 
then $G/N$ is finite or $N$ is free. 
\end{theorem}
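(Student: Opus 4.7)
The plan is to apply Theorem 2.1 (in the amalgamated case) or Theorem 2.2 (in the HNN case) and exhaust the resulting dichotomy with the hypothesis on $A$ and $B$.

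First I would observe that since $C$ is free abelian of finite rank, every subgroup of $C$ is finitely generated; in particular $N\cap C$ is finitely generated. Because $N$ is finitely generated and $N\not\subseteq C$, Theorem 2.1 (or 2.2) then yields that $NC$ has finite index in $G$ and that $N\cap A$ (and $N\cap B$) are finitely generated. Since $N$ is normal in $G$, $N\cap A$ is a normal subgroup of $A$, and likewise $N\cap B$ is normal in $B$. Thus in both cases we are handed finitely generated normal subgroups of $A$ and $B$, on which the standing hypothesis bites: each of $N\cap A$ and $N\cap B$ must be either trivial or of finite index.

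Next I would split into two cases. In the amalgamated case, if $N\cap A$ and $N\cap B$ are both trivial, then using the Karrass--Solitar graph-of-groups description recalled in the proof of Theorem 2.1, the vertex groups $g_\alpha(N\cap A)g_\alpha^{-1}$ and $g_\beta(N\cap B)g_\beta^{-1}$ are all trivial, and consequently so are the edge groups; hence $N$ is the fundamental group of a graph of trivial groups, i.e.\ a free group. In the HNN case the same argument works since $N\cap D\subseteq N\cap A$. Otherwise, say $N\cap A$ has finite index in $A$ (the case $N\cap B$ finite index in $B$ being symmetric, and irrelevant in the HNN case). Since $C\subseteq A$, the natural injection $C/(N\cap A\cap C)\hookrightarrow A/(N\cap A)$ shows $[C:N\cap C]\leq [A:N\cap A]<\infty$, so $C/N\cap C$ is finite. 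The last sentence of Theorem 2.1 (resp.\ Theorem 2.2) then gives that $G/N$ is finite.

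Combining the two cases yields the dichotomy in the conclusion. I do not anticipate a serious obstacle: the only subtle point is the symmetric case where $N\cap A$ is trivial but $N\cap B$ has finite index in $B$ (amalgamated case), which is handled by the same inclusion argument with $B$ in place of $A$. The argument is essentially a one-line corollary of Theorems 2.1 and 2.2 once the hypothesis on normal subgroups of $A$ and $B$ is brought to bear on $N\cap A$ and $N\cap B$.
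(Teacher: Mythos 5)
Your proof is correct and follows essentially the same route as the paper's: apply Theorem 2.1 (or 2.2) to get $N\cap A$ (and $N\cap B$) finitely generated, use the hypothesis on $A$ and $B$ to obtain the trivial-or-finite-index dichotomy, and conclude freeness from the graph of trivial groups in one case and finiteness of $G/N$ in the other. The only differences are minor: the paper concludes the finite-index case directly from $A/N\cap A\cong NA/N$ being finite together with $[G:NA]<\infty$, rather than passing through $C/N\cap C$ and the last sentence of Theorem 2.1, and it first disposes of the trivial decomposition ($A=C$ or $B=C$), a boundary case you should also handle explicitly since Theorem 2.1 as stated applies only to nontrivial amalgamated decompositions.
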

\begin{proof}
If $G = A\ast_C B$ is a trivial decomposition, the result is clear, and so in the amalgamated case,  
we may assume that $G = A\ast_C B$ is a nontrivial decomposition. 
We have that $N\cap C$ is finitely generated.   
Hence $N\cap A$ is finitely generated (and $N\cap B$ is finitely generated) by Theorem 2.2 (or 2.1). 
Therefore either $N\cap A =\{1\}$ or $N\cap A$ is of finite index in $A$   
(and either $N\cap B=\{1\}$ or $N\cap B$ is of finite index in $B$) by hypothesis.

Now $NC$ is of finite index in $G$ by Theorem 2.2 (or 2.1). 
Hence $NA$ is of finite index in $G$ (and $NB$ is of finite index in $G$). 
If $A/N\cap A$ is finite, then $A/N\cap A \cong NA/N$ is finite, and so $G/N$ is finite. 

If $N\cap A =\{1\}$ (and $N\cap B=\{1\}$), 
then $N$ is free, since $N$ is the product of a finite graph of subgroups 
all of whose vertex groups are trivial by the proofs of Theorems 2.2 (and 2.1). 
\end{proof}

\begin{remark}  The main result of Moon \cite{M2} (Theorem 2.3) can also be proved 
in the same way.  An application of Theorem 2.1 would also simplify the proof of
Proposition 2.9 of M. Sykiotis \cite{Sykiotis}. 
\end{remark}

\vspace{.2in}
We next consider a corollary of Theorem 2.1 which will be more useful 
for our application to knot theory. 

\begin{corollary} 
Let $G$ be a group with an amalgamated product decomposition $G=A\ast_CB$, 
and  let $\phi: G \to \integers$ be a homomorphism such that ${\rm ker}(\phi |_C)$ is finitely generated 
and not equal to $C$. 
Then ${\rm ker}(\phi)$ is finitely generated if and only if ${\rm ker}(\phi |_A)$ and ${\rm ker}(\phi |_B)$ 
are finitely generated. 
\end{corollary}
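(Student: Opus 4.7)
The plan is to apply Theorem 2.1 directly to $N=\ker(\phi)$, exploiting the identification $N\cap H=\ker(\phi|_H)$ for every subgroup $H\le G$. First I would dispose of the degenerate cases: if $A=C$ then $G=B$ and the statement collapses to the tautology $\ker(\phi)=\ker(\phi|_B)$, and similarly if $B=C$. So I may henceforth assume that $G=A\ast_CB$ is a nontrivial amalgamated product, which is the setting Theorem 2.1 requires.

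The key preliminary step is to verify the index condition $[G:NC]<\infty$, which will serve as the bridge between the corollary's hypothesis on $\phi|_C$ and the hypotheses of Theorem 2.1. Since $\ker(\phi|_C)\neq C$, the image $\phi(C)$ is a nonzero subgroup of $\integers$ and hence of finite index in $\phi(G)\cong G/N$. Because the isomorphism $G/N\cong\phi(G)$ carries $NC/N$ onto $\phi(C)$, this immediately yields $[G:NC]<\infty$. As a bonus, the converse portion of Theorem 2.1 then gives $N\not\subseteq C$ for free, since $C$ has infinite index in any nontrivial amalgamated product.

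With $[G:NC]<\infty$, $N\not\subseteq C$, and $N\cap C$ finitely generated all in hand, both implications of the corollary fall out directly from the two halves of Theorem 2.1: the forward direction of the theorem produces finitely generated $N\cap A$ and $N\cap B$ from a finitely generated $N$, and the converse direction recovers a finitely generated $N$ from finitely generated $N\cap A$ and $N\cap B$. The only non-mechanical step is the translation of the hypothesis on $\phi|_C$ into $[G:NC]<\infty$, which is a short observation about subgroups of $\integers$; I do not expect any real obstacle beyond this bookkeeping.
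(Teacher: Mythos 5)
Your proposal is correct and follows essentially the same route as the paper's proof: reduce to the nontrivial decomposition, use $\ker(\phi|_C)\neq C$ to see $\phi(C)$ is a nonzero (hence finite-index) subgroup of $\integers$, conclude $[G:NC]<\infty$ and $N\not\subseteq C$, and then invoke both directions of Theorem 2.1. Your handling of the degenerate case and of the possibility that $\phi(G)$ is a proper subgroup of $\integers$ is slightly more explicit than the paper's, but the argument is the same.
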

\begin{proof}
Let $N = {\rm ker}(\phi)$.  Then $N\cap C = {\rm ker}(\phi |_C)$, $N\cap A = {\rm ker}(\phi |_A)$, 
and $N\cap B = {\rm ker}(\phi |_B)$, and $N\cap C$ is finitely generated. 
If $G=A\ast_CB$ is a trivial decomposition, the result is clear, 
and so we may assume that $G=A\ast_CB$ is a nontrivial decomposition. 
As ${\rm ker}(\phi |_C) \neq C$, the homomorphism $\phi |_C$ is nontrivial. 
Hence $\phi(C)$ is of finite index in $\integers$. 
Therefore $NC$ is of finite index in $G$.  
Hence $N\not\subseteq C$ by Theorem 2.1.  
Therefore $N$ is finitely generated if and only if $N\cap A$ and $N\cap B$ 
are finitely generated by Theorem 2.1. 
\end{proof}

\section{Free normal subgroups} 

We next consider a result about normal subgroups whose proof is similar to the proofs 
of Theorems 2.1 and 2.2 . 

\begin{theorem}  
Let $N$ be a nontrivial normal subgroup of a group $G$ such that $G = A\ast _C$ (or $G = A\ast_C B$), 
and $N\cap C = \{1\}$. 
Then (1) the group $N$ is free if and only if $N\cap A$ is free (and $N\cap B$ is free), 
and (2) the group $N$ is finitely generated if and only if $NC$ is of finite index in $G$ and 
$N\cap A$ is finitely generated (and $N\cap B$ is finitely generated). 
If $N$ is finitely generated and $a= [G:NA]$  (and $b = [G:NB])$ and $c=[G:NC]$, then 
$${\rm rank}(N) = a\,{\rm rank}(N\cap A) + {\big(} b\,{\rm rank}(N\cap B){\big )} + 1 + c\, - a\, - \big(b\big).$$
\end{theorem}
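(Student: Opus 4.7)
The natural approach is to reuse the graph-of-groups decomposition $\Gamma$ of $N$ constructed in the proofs of Theorems 2.1 and 2.2, and to exploit the hypothesis $N\cap C=\{1\}$. Each edge group of $\Gamma$ has the form $N\cap g_\gamma Cg_\gamma^{-1} = g_\gamma(N\cap C)g_\gamma^{-1}$ (or the analogous HNN conjugate), so every edge group is trivial; and since $N$ is nontrivial while $N\cap C=\{1\}$, the condition $N\not\subseteq C$ is automatic. Because all edge groups are trivial, the standard Bass--Serre presentation of $N$ collapses: picking a spanning tree $T$ of the connected graph $\Gamma$, one obtains
$$N \;\cong\; \Bigl(\mathop{\ast}_{v\in V(\Gamma)} G_v\Bigr)\ast F_r,$$
where the $G_v$ are the vertex groups and $F_r$ is free of rank $r=|E(\Gamma)|-|V(\Gamma)|+1$, freely generated by the edges of $\Gamma$ off $T$.

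Given this decomposition, part~(1) is essentially formal. If $N$ is free, Nielsen--Schreier forces every vertex subgroup, in particular $N\cap A$ (and $N\cap B$ in the amalgamated case), to be free; conversely, if the vertex groups are free, the displayed expression is a free product of free groups and so is free. For part~(2) I would simply invoke Theorem~2.1 (respectively Theorem~2.2), noting that its hypothesis that $N\cap C$ be finitely generated is trivially satisfied here, and that $N\not\subseteq C$ is automatic, so that its conclusion reduces to exactly the biconditional stated.

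For the rank formula, assume $N$ is finitely generated, so $\Gamma$ is finite. In the amalgamated case $|V(\Gamma)|=a+b$ and $|E(\Gamma)|=c$; in the HNN case $|V(\Gamma)|=a$ and $|E(\Gamma)|=[G:ND]=c$, using $ND=t(NC)t^{-1}$. Since rank is conjugation invariant, each of the $a$ type-$A$ vertex groups has rank $\mathrm{rank}(N\cap A)$, and, in the amalgamated case, each of the $b$ type-$B$ vertex groups has rank $\mathrm{rank}(N\cap B)$. Applying Grushko's theorem inductively to the free product then yields
$$\mathrm{rank}(N)=a\,\mathrm{rank}(N\cap A)+\bigl(b\,\mathrm{rank}(N\cap B)\bigr)+1+c-a-(b),$$
the parenthesized terms appearing only in the amalgamated case. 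The single substantive external input is Grushko's theorem (to turn the rank of a free product of finitely generated groups into a sum of ranks); everything else is routine counting of vertices and edges of $\Gamma$, so this is where I would expect the only real care to be needed.
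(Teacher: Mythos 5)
Your proposal is correct and follows essentially the same route as the paper: the graph-of-groups decomposition of $N$ with trivial edge groups (since $N\cap g_\gamma Cg_\gamma^{-1}=g_\gamma(N\cap C)g_\gamma^{-1}=\{1\}$), the automatic $N\not\subseteq C$, the reduction of part (2) to Theorems 2.1 and 2.2, and Grushko's theorem combined with the vertex/edge count $\chi(\Gamma)=a+b-c$ (resp.\ $a-c$) for the rank formula. The paper phrases the free-part rank via collapsing the spanning tree rather than via $|E|-|V|+1$, but that is the same computation.
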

\begin{proof}
By the subgroup theorems of Karrass, Pietrowski, and Solitar \cite{K-S}, \cite{K-P-S}, 
the group $N$ is the product of a graph of subgroups, described in the proofs of Theorems 2.1 and 2.2, 
all of whose edge groups are trivial. 
Hence $N$ is the free product of conjugates of $N\cap A$ (and conjugates of $N\cap B$) and a free group. 
Hence $N$ is free if and only if $N\cap A$ (and $N\cap B$ are free). 

As $N$ is nontrivial and $N\cap C=\{1\}$, we have that $N\not\subseteq C$. 
By Theorems 2.1 and 2.2 we have that $N$ is finitely generated if and only if $NC$ is of finite index in $G$ and $N\cap A$ is finitely generated (and $N\cap B$ is finitely generated). 

Assume that $N$ is finitely generated.  
First assume that $G = A\ast_C$. 
From the proof of Theorem 2.2, we have that $N = P\ast F$, where $P$ is the free product of 
the groups $g_\alpha (N\cap A) g_\alpha^{-1}$, and $\{g_\alpha\}$ is a set of coset representatives 
for $NA$ in $G$, and $F$ is a free group whose rank is the number of edges of the graph $\Gamma$ 
in the complement of a spanning tree ${\rm T}$. 
By Grushko's theorem, we have that 
$${\rm rank}(N) =  {\rm rank}(P) + {\rm rank}(F) = a\,{\rm rank}(N\cap A) + {\rm rank}(F).$$
By collapsing ${\rm T}$, we see that 
$1-{\rm rank}(F) = \chi(\Gamma) = a -c$, and so ${\rm rank}(F) = 1+c - a$. 
The proof for the amalgamated case is similar. 
\end{proof}

We illustrate the use of Theorem 3.1 by proving a theorem about one-relator groups. 
Let $N$ be a normal subgroup of a  one-relator group
$G = \langle x_1,\ldots, x_n ; r\rangle$ such that $G/N$ is infinite cyclic. 
If $n >2$, then $N$ is not finitely generated by Lemma 3 of \cite{B-S}. 
Hence, we will assume $n=2$.  
Then $G_{ab} \cong \integers/m\integers\oplus \integers$ for some integer $m \geq 0$, 
with $m = 0$ if and only if $r$ is in the commutator subgroup 
of the free group $F(x_1,x_2)$. 
Suppose that $m > 0$. Then $m$ is called the {\it torsion number} of $G$,  
and $N$ is the unique normal subgroup of $G$ such that $G/N$ is infinite cyclic, 
and $N$ contains $[G,G]$ as a subgroup of index $m$. 
If $N$ is finitely generated, Moldavanskii \cite{Mold} proved that $N$ is a free group of finite rank. 
The two-generator one-relator groups that are an infinite cyclic extension 
of a free group of rank two were classified up to isomorphism 
by A. Brunner, J. McCool, and A. Pietrowski \cite{B-M-P}.

\begin{theorem}  
Let $G = \langle x,y;r\rangle$ and $H = \langle u, y; s)$, 
suppose $r(x,y) = s(x^e,y)$ for some integer $e > 0$, and the exponent sum of $r$ 
with respect to $y$ is nonzero. 
Then $G$ is an infinite cyclic extension of a free group of finite rank if and only if $H$ is an infinite 
cyclic extension of a free group of finite rank. 

Let $p$ and $q$ be the exponent sums of $r$ with respect to $x$ and $y$, respectively, 
let $m =(p,q)$ be the greatest common divisor of $p$ and $q$, 
and let $a = p/m$ and $b = q/m$.  
If $G$ is an infinite cyclic extension of a group of finite rank $k$ 
and $H$ is an infinite cyclic extension of a group of finite rank $\ell$, then
$$k = 1 + (a,e)(\ell - 1) + |b|(e-1).$$
\end{theorem}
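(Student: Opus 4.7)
The plan is to realize $G$ as an amalgamated free product $G = H\ast_{\langle u\rangle}\langle x\rangle$ (identifying $u$ with $x^e$) and then apply Corollary 2.5 and Theorem 3.1 to the canonical homomorphism to $\integers$. The case $e=1$ gives $G=H$ trivially, so assume $e\geq 2$; I shall also assume the amalgam is nontrivial, the remaining degenerate situation $G\cong\integers$ being easily checked by hand.

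To verify the decomposition, observe that the exponent sum of $s$ with respect to $y$ equals $q\neq 0$, so abelianizing $H$ yields a homomorphism $H\to\integers$ that is nonzero on $u$, whence $\langle u\rangle\cong\integers$ inside $H$. Eliminating $u$ from the presentation $\langle u,y,x\,;\,s(u,y),\,u=x^e\rangle$ recovers the presentation of $G$, so $G=H\ast_{\langle u\rangle}\langle x\rangle$.

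Next, introduce the homomorphism. Since $\gcd(a,b)=1$, there is a surjection $\phi\colon G\to\integers$ with $\phi(x)=b$ and $\phi(y)=-a$, whose kernel $N_G$ is the unique normal subgroup of $G$ with infinite cyclic quotient; let $N_H$ be its analogue for $H$. On the edge group $C=\langle u\rangle$ the element $u$ is sent to $eb\neq 0$, so $\ker(\phi|_C)=\{1\}$ is finitely generated and proper in $C$; likewise $\ker(\phi|_{\langle x\rangle})=\{1\}$. Using $\gcd(a,b)=1$ one finds $\gcd(a,eb)=(a,e)$, so $\phi(H)=(a,e)\integers$ and $H/\ker(\phi|_H)\cong\integers$, forcing $\ker(\phi|_H)=N_H$. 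Corollary 2.5 then gives: $N_G$ is finitely generated iff $N_H$ is, and by Moldavanskii's theorem both are then automatically free of finite rank. This proves the equivalence.

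For the rank formula, apply Theorem 3.1 to the decomposition with $N=N_G$; the hypothesis $N\cap C=\{1\}$ has already been checked. The vertex contributions are ${\rm rank}(N\cap H)=\ell$ and ${\rm rank}(N\cap\langle x\rangle)=0$, and pushing through $\phi$ yields the indices $[G:N_GH]=(a,e)$, $[G:N_G\langle x\rangle]=|b|$, and $[G:N_G\langle u\rangle]=e|b|$. Substituting into the formula of Theorem 3.1 reduces to
\[k \;=\; (a,e)\ell + 1 + e|b| - (a,e) - |b| \;=\; 1 + (a,e)(\ell-1) + |b|(e-1),\]
as required. The only real obstacle is bookkeeping: keeping $a,b,e,p,q,m$ straight and confirming the small identity $\gcd(a,eb)=(a,e)$ from $\gcd(a,b)=1$, which is the source of the factor $(a,e)$ rather than $a$ in the final formula.
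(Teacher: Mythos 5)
Your proof is correct and follows essentially the same route as the paper: the decomposition $G=\langle x\rangle\ast_{x^e=u}H$ (justified by showing $u$ has infinite order in $H$), the homomorphism $\phi$ with $\phi(x)=\pm b$, $\phi(y)=\mp a$, Corollary 2.4 (which you mislabel as 2.5) combined with Theorem 3.1, and the identical index computations $[G:N\langle x\rangle]=|b|$, $[G:NH]=(a,e)$, $[G:N\langle u\rangle]=e|b|$. The only cosmetic differences are your appeal to Moldavanskii's theorem for freeness where the paper uses part (1) of Theorem 3.1, and your explicit disposal of the degenerate cases $e=1$ and $G\cong\integers$.
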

\begin{proof}
We have 
$$G_{ab} \cong \integers\oplus \integers/\langle (p,q)\rangle \cong \integers /m\integers \oplus\integers.$$
Let $N$ be a normal subgroup such that $G/N$ is infinite cyclic. 
Then $N$ is the subgroup of $G$ containing $[G,G]$ that corresponds to the torsion subgroup of $G_{ab}$. 

The torsion number $m$ of $G$ is the greatest common division of $p$ and $q$. 
Let $a = p/m$ and $b=q/m$.  Then $a$ and $b$ are relatively prime. 
Hence there are integers $c$ and $d$ such that $ad-bc=1$. 
Then $(a,b)$ and $(c,d)$ form a basis of $\integers\oplus\integers$ 
and $(a,b) = (p/m,q/m)$ represents an element of order $m$ in $\integers\oplus \integers/\langle (p,q)\rangle$, 
and so $(c,d)$ represents a generator of  $\integers\oplus \integers/\langle (a,b)\rangle \cong \integers$. 
We have $(1,0) =d(a,b)-b(c,d)$. 
As $x$ projects to $(1,0)$ in $\integers\oplus \integers$, 
we have that $x$ projects to $-b$ in $\integers\cong G/N$. 
We have $(0,1) = -c (a,b)+a(c,d)$.
As $y$ projects to $(0,1)$ in $\integers\oplus \integers$, 
we have that $y$ projects to $a$ in $\integers\cong G/N$.

Now as $b\neq 0$, we have that $x$ has infinite order in $G$. 
Similarly, $u$ has infinite order in $H$. 
Hence $G = \langle x\rangle\ast_{x^e=u} H$, 
and so we may identify $H$ with the subgroup $B=\langle x^e,y\rangle$ of $G$. 
Let $\phi: G\to G/N$ be the quotient map, let $A = \langle x\rangle$ and $C = \langle x^e\rangle$. 
Now $\phi(A)$ is infinite cyclic, since $b\neq 0$. 
Hence $N\cap A = {\rm ker}(\phi |_A)=\{1\}$.  
Therefore ${\rm ker}(\phi |_C)=N\cap C=\{1\}$ and $\phi(C)$ is infinite cyclic.  
Hence $N ={\rm ker}(\phi)$ is a free group of finite rank if and only if $N\cap B = {\rm ker}(\phi |_B)$ 
is a free group of finite rank by Corollary 2.4 and Theorem 3.1. 
Moreover $\phi(B)$ is infinite cyclic.

Suppose $N$ has finite rank $k$ and $N\cap B$ has finite rank $\ell$. 
Now $[G:NA]=|b|$, $[G:NB] =(a,be) = (a,e)$, and $[G:NC] = |b|e$. 
By Theorem 3.1, we have 
$$k = (a,e)\ell + 1 + |b|e - |b| -(a,e)=(a,e)(\ell-1) + |b|(e-1).$$

\vspace{-.35in}
\end{proof}

\begin{example}  Let $G = \langle x,y; x^2y^2x^2y^{-1}\rangle$ 
and $H = \langle u, y ;uy^2uy^{-1}\rangle$. 
Then $G$ and $H$ satisfy the hypothesis of Theorem 3.2. 
By applying the automorphism, $u  \mapsto uy$ and $y\mapsto y$, of the free group $F(u,y)$, 
we see that  $\langle u,y;uy^2uy^{-1}\rangle$ is Nielsen equivalent 
to $\langle u,y; u^2y^3\rangle$. 
Now comparing $\langle u,y; u^2y^3\rangle$ with the infinite cyclic group $\langle  v,y; vy^3\rangle$,  
we see that $H$ is an infinite cyclic extension of a free group of rank two by Theorem 3.2.  
Hence $G$ is an infinite cyclic extension of a free group of rank four by Theorem 3.2. 
We will continue with this example in \S 5. 
\end{example}

\section{Application to knot theory} 

We now review some terminology from the first chapter of Eisenbud and Neumann \cite{E-N}. 
A {\it link} ${\bf L} = (\Sigma, K) = (\Sigma, S_1\cup\cdots \cup S_n)$ is a pair consisting of 
an oriented smooth homology 3-sphere $\Sigma$ and a collection of smooth disjoint oriented 
simple closed curves $S_1,\ldots, S_n$ in $\Sigma$, called the {\it components} of ${\bf L}$. 
A {\it knot} is a link with just one component. 
We denote a closed tubular neighborhood of $K$ in $\Sigma$ by $N(K)$. 
Here $N(K) = N(S_1)\cup\cdots\cup N(S_m)$ where $N(S_1), \ldots, N(S_n)$ are 
disjoint, closed, tubular neighborhoods of $S_1,\ldots, S_n$. 
The space $\Sigma_0 = \Sigma - {\rm int}\,N(K)$ is called the {\it link exterior}. 

We denote by $M_i$ and $L_i$ a topologically standard meridian and longitude 
of the link component $S_i$. They are a pair of oriented simple closed curves in $\partial N(S_i)$ 
which are determined up to isotopy by the homology and linking relations: 
$M_i \sim 0, L_i \sim S_i$ in $H_1(N(S_i))$, and $\ell(M_i,S_i) = 1, \ell(L_i,S_i) = 0$, 
where $\ell(\_\,, \_)$ denotes linking number in $\Sigma$. 

Let ${\bf L'}= (\Sigma', K')$ and ${\bf L}''=(\Sigma'',K'')$ be links. 
We can form the {\it disjoint sum} 
$${\bf L}'+ {\bf L}''= (\Sigma' \# \Sigma'', K' \cup K'')$$
by taking the connected sum of $\Sigma'$ and $\Sigma''$ along disks that do not intersect the links. 
If link ${\bf L}$ can be expressed as a nontrivial disjoint sum (that is, neither summand is the empty link 
in $S^3$), then we say that ${\bf L}$ is a {\it reducible} link. 
A link ${\bf L}$ is irreducible if and only if its link exterior is an irreducible 3-manifold.

Let ${\bf L} = (\Sigma, K)$ and ${\bf L'}=(\Sigma',K')$ be links, and choose components $S$ of  $K$ 
and $S'$ of $K$.  Let $M, L \subset \partial N(S)$ and $M', L' \subset \partial N(S')$ be standard 
meridians and longitudes.  Form 
$$\Sigma'' = (\Sigma - {\rm int}\, N(S))\cup (\Sigma'-{\rm int}\,N(S')),$$
pasting along boundaries by matching $M$ to $L'$ and $L$ to $M'$ via a diffeomorphism. 
Then $\Sigma''$ is a smooth homology 3-sphere, and the orientations of  the exteriors of ${\bf L}$ and 
${\bf L'}$ extend to an orientation of $\Sigma''$. 
The link $(\Sigma'', (K-S)\cup (K'-S'))$ is called the {\it splice} of ${\bf L}$ and ${\bf L'}$ along $S$ and $S'$. 

A {\it multilink} is a link ${\bf L} = (\Sigma, S_1\cup \cdots\cup S_n)$ together with an integral multiplicity 
$m_i$ associated with each component $S_i$, with the convention that a component $S_i$ 
with multiplicity $m_i$ means the same thing as $-S_i$ ($S_i$ with reversed orientation) 
with multiplicity $-m_i$. 
We denote the multilink by ${\bf L}(m_1,\ldots,m_n)$. 

A {\it multilink} ${\bf L}(m_1,\ldots,m_n)$ determines an integral cohomology class 
${\bf m}\in H^1(\Sigma - K) = H_1(\Sigma - K)^\ast$ as follows: 
The class ${\bf m}$ evaluated on a 1-cycle $S$ is the linking number 
$${\bf m}(S) = \ell(S,m_1S_1+\cdots + m_nS_n) = \mathop{\sum}_{i=1}^n m_i\ell(S,S_i).$$
In particular, $m_i = {\bf m}(M_i)$ where $M_i$ is a standard oriented meridian of $S_i$. 
We also write ${\bf L}({\bf m})$ for ${\bf L}(m_1,\ldots,m_n)$. 
Each class ${\bf m}\in H^1(\Sigma - K) = H_1(\Sigma - K)^\ast$ 
corresponds to a multilink ${\bf L}({\bf m})$.

Suppose a link ${\bf L} = (\Sigma, K)$ is the result of splicing links ${\bf L}' = (\Sigma', K')$ 
and ${\bf L}'' = (\Sigma'',K'')$ along components $S'$ and $S''$. 
Let $\Sigma_0, \Sigma_0', \Sigma_0''$ be the respective link exteriors so that 
$\Sigma_0 = \Sigma_0' \cup \Sigma_0''$ is joined along $\partial N(S') = \partial N(S'')$. 
Then any cohomology class ${\bf m}\in H^1(\Sigma_0)$ restricts to classes ${\bf m}'\in H^1(\Sigma_0')$ 
and ${\bf m}''\in H^1(\Sigma_0'')$.  We say that ${\bf L}({\bf m})$ is the result of 
{\it splicing} ${\bf L}'({\bf m}')$ and ${\bf L}''({\bf m}'')$. 

A multilink ${\bf L}({\bf m})$ with link exterior $\Sigma_0$ is said to be {\it fibered} 
if there exists a fiber bundle projection $f: \Sigma_0 \to S^1$ such that $f_\ast : H_1(\Sigma_0) \to H_1(S^1)$ 
corresponds to ${\bf m} \in H^1(\Sigma_0) = H_1(\Sigma_0)^\ast$. 

Let $S$ be a component of a link ${\bf L}$ with exterior $\Sigma_0$. 
Then $\partial N(S)$ is said to be {\it compressible} in $\Sigma_0$ if there is a disk $D$ in $\Sigma_0$ 
such that $D\cap \partial N(S) = \partial D$ and $\partial D$ is not contractible in $\partial N(S)$. 

\begin{theorem} 
Let ${\bf L}({\bf m})$ be a multilink which is the splice sum of multilinks 
${\bf L}'({\bf m}')$ and ${\bf L}''({\bf m}'')$ along components $S'$ and $S''$ 
such that $\partial N(S')$ is incompressible in the exterior of ${\bf L}'$ and $\partial N(S'')$ is incompressible in the exterior of ${\bf L}''$. 
Then ${\bf L}({\bf m})$ is fibered if and only if 
${\bf L}'({\bf m}')$ and ${\bf L}''({\bf m}'')$ are fibered. 
\end{theorem}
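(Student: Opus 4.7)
The plan is to reduce Theorem 4.1 to the group-theoretic Corollary 2.4 via van Kampen's theorem and Stallings' fibering theorem. First, I would set $T = \partial N(S') = \partial N(S'')$, so that the splice exterior decomposes as $\Sigma_0 = \Sigma_0' \cup_T \Sigma_0''$. By the incompressibility hypothesis, the inclusions induce injections $\pi_1(T) \hookrightarrow \pi_1(\Sigma_0')$ and $\pi_1(T) \hookrightarrow \pi_1(\Sigma_0'')$, so Seifert--van Kampen yields
\[
G := \pi_1(\Sigma_0) \;\cong\; A \ast_C B, \qquad A = \pi_1(\Sigma_0'), \ B = \pi_1(\Sigma_0''), \ C = \pi_1(T) \cong \integers^2.
\]
The amalgamation is nontrivial, i.e.\ $C \neq A$ and $C \neq B$, because a link exterior in a homology 3-sphere is never a product $T\times I$.

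Next, let $\phi: G \to \integers$ be the homomorphism corresponding to ${\bf m} \in H^1(\Sigma_0)$, so that $\phi|_A = \phi'$ and $\phi|_B = \phi''$ correspond to ${\bf m}'$ and ${\bf m}''$. Because $\Sigma_0'$ and $\Sigma_0''$ are irreducible and $T$ is incompressible in each, the splice $\Sigma_0$ is also irreducible by the standard 3-manifold gluing lemma, and conversely irreducibility descends to each piece. By Stallings' fibering theorem, ${\bf L}({\bf m})$ is fibered if and only if $\ker(\phi)$ is finitely generated, and the analogous statements hold for ${\bf L}'({\bf m}')$ and ${\bf L}''({\bf m}'')$. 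It therefore remains to show that $\ker(\phi)$ is finitely generated if and only if both $\ker(\phi|_A)$ and $\ker(\phi|_B)$ are.

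I would now apply Corollary 2.4 with $G = A \ast_C B$ and $\phi$ as above. Its hypothesis that $\ker(\phi|_C)$ is finitely generated is automatic since $C \cong \integers^2$. The remaining hypothesis $\ker(\phi|_C) \neq C$, i.e.\ $\phi|_C \not\equiv 0$, is the main obstacle and must be verified in each direction of the equivalence. If either ${\bf L}'({\bf m}')$ or ${\bf L}''({\bf m}'')$ is fibered, then $T$ is a boundary torus of the corresponding piece, and the Seifert surface of the multilink fibration meets $T$ transversely in a nonempty collection of essential parallel curves, giving $\phi'|_C$ (equivalently $\phi''|_C$) nonzero. If instead ${\bf L}({\bf m})$ is fibered, then $T$ is an incompressible torus in the irreducible fibered 3-manifold $\Sigma_0$; a standard argument shows such a $T$ can be isotoped either to coincide with a fiber or to be transverse to all fibers, and the former is impossible because the fibers have nonempty boundary in $\partial\Sigma_0$ while $T$ is closed, so $T$ is transverse and $\phi|_C \neq 0$. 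With this hypothesis verified, Corollary 2.4 supplies the group-theoretic equivalence, and Stallings' theorem translates it back to the desired fibering equivalence of multilinks.
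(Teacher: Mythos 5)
Your overall architecture coincides with the paper's: decompose $G=\pi_1(\Sigma_0)$ as $A\ast_C B$ via van Kampen (the paper uses the Loop Theorem to get the injectivity of $\pi_1(T)$ from incompressibility), feed $\phi$ into Corollary 2.4, and translate back and forth with Stallings' theorem. The genuine divergence is in how you verify the key hypothesis $\ker(\phi|_C)\neq C$. In the direction where ${\bf L}({\bf m})$ is assumed fibered, you invoke ``a standard argument'' that an incompressible torus in a fibered $\Sigma_0$ can be isotoped to a fiber or transverse to all fibers. That statement is a real theorem (Roussarie--Thurston transversality for taut foliations), but it is deep, you do not prove it, and it reimports exactly the foliation-flavored machinery that this paper's group-theoretic approach is designed to avoid. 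The paper gets the same conclusion in one line: the fibration exact sequence shows $\ker(\phi)=\pi_1(F)$ is a finitely generated free group, so $\ker(\phi|_C)=C\cap\ker(\phi)$ is free, and since $C\cong\integers^2$ is not free, $\ker(\phi|_C)\neq C$; the symmetric argument in the converse direction uses that $\ker(\phi|_C)\subseteq\ker(\phi|_A)$, which is free. You should replace your transversality appeal with this observation. Your other verification (a fibration restricted to a boundary torus is still a fibration, so $\phi'|_C\neq 0$) is correct but likewise heavier than needed.

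Two smaller points. First, your claim that the amalgamation is nontrivial ``because a link exterior in a homology 3-sphere is never a product $T\times I$'' is false: the Hopf link exterior in $S^3$ is $T^2\times I$, and splicing with it is a legitimate (identity) operation. Fortunately nontriviality is not needed, since Corollary 2.4 explicitly disposes of the trivial case, so you should simply drop that sentence. Second, your irreducibility discussion begins ``Because $\Sigma_0'$ and $\Sigma_0''$ are irreducible,'' which is not a hypothesis; in each direction of the equivalence, irreducibility of the fibered side comes from Jaco's lemma (a bundle over $S^1$ with fiber not $S^2$ is irreducible) and is then transferred across the incompressible torus by Waldhausen's Lemma 1.1.4, as the paper does. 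The logic is repairable, but as written it is circular in presentation.
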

\begin{proof} 
Let $\Sigma_0, \Sigma_0', \Sigma_0''$ be the respective link exteriors so that 
$\Sigma_0 = \Sigma_0' \cup \Sigma_0''$ is joined along $\partial N(S') = \partial N(S'')$. 
By the Loop Theorem \cite{H}, we have that $\pi_1(\partial N(S'))$ injects into $\pi_1(\Sigma_0')$ 
and $\pi_1(\partial N(S''))$ injects into $\pi_1(\Sigma_0'')$. 
Let $G = \pi_1(\Sigma_0)$, let $A$ be the image of $\pi_1(\Sigma_0')$ in $G$,  
let $B$ be the image of $\pi_1(\Sigma_0'')$ in $G$, and let $C$ be the image 
of $\pi_1(\partial N(S'))$ in $G$.  
Then $\pi_1(\Sigma_0')$ injects onto $A$, and $\pi_1(\Sigma_0'')$ injects onto $B$, and $G= A\ast_C B$ 
by van Kampen's theorem. 
Let $\phi:G\to \integers$ be the homomorphism corresponding to 
${\bf m} \in H^1(\Sigma_0) = H_1(\Sigma_0)^\ast$.
Then $\phi |_A: A \to \integers$ corresponds to ${\bf m}' \in H^1(\Sigma_0') = H_1(\Sigma_0')^\ast$,  
and  $\phi |_B: B \to \integers$ corresponds to ${\bf m}''\in H^1(\Sigma_0'') = H_1(\Sigma_0'')^\ast$. 
As $C\cong \integers^2$, we deduce that ${\rm ker}(\phi |_C)$ is finitely generated and infinite. 
Hence ${\rm ker}(\phi |_A)$, ${\rm ker}(\phi |_B)$, and ${\rm ker}(\phi)$ are infinite.

Suppose that ${\bf L}({\bf m})$ is  fibered with fiber $F$ and fiber bundle projection $f:\Sigma_0\to S^1$.  
Then $F$ is a compact 2-manifold with boundary. 
By Lemma 2.1 of Jaco \cite{J}, the link ${\bf L}$ is irreducible. 
By Lemma 1.1.4 of Waldhausen \cite{W}, the links ${\bf L}'$ and ${\bf L}''$ are irreducible, since 
$\partial N(S')=\partial N(S'')$ is incompressible in $\Sigma_0$. 
The induced homomorphism $f_\ast : G \to \pi_1(S^1)$ corresponds to $\phi:G\to \integers$. 
From the exact sequence for the fibration $f$, 
$$1\to \pi_1(F) \to \pi_1(\Sigma_0) \to \pi_1(S^1) \to \pi_0(F) \to 1,$$
we deduce that ${\rm ker}(\phi)$ is a finitely generated free group and $\phi(G)$ is infinite cyclic. 
Hence ${\rm ker}(\phi |_C)$ is a free group.  The group $C$ is not free, since $C\cong \integers^2$. 
Therefore ${\rm ker}(\phi |_C)$ is not equal to $C$.  
Hence $\phi(C)$ is infinite cyclic. 
Therefore $\phi(A)$  and $\phi(B)$ are infinite cyclic. 
Moreover ${\rm ker}(\phi |_A)$ and ${\rm ker}(\phi |_B)$ are finitely generated by Corollary 2.4.  
Therefore ${\bf L}'({\bf m}')$ and ${\bf L}''({\bf m}'')$ are fibered by Stallings' fibering theorem \cite{S}. 

Conversely, suppose that ${\bf L}'({\bf m}')$ and ${\bf L}''({\bf m}'')$ are fibered. 
Then ${\bf L}$ and ${\bf L}''$ are irreducible. 
Hence ${\bf L}$ is irreducible by Lemma 1.1.4 of \cite{W}, 
since $\partial N(S')=\partial N(S'')$ is incompressible in $\Sigma_0$. 
Moreover ${\rm ker}(\phi |_A)$ and ${\rm ker}(\phi |_B)$ are finitely generated free groups. 
Therefore ${\rm ker}(\phi |_C) \neq C$, since $C$ is not a free group. 
Hence ${\rm ker}(\phi)$ is finitely generated by Corollary 3.1. 
As $\phi(A)$ is infinite cyclic,  $\phi(G)$ is infinite cyclic.  
Thus ${\bf L}({\bf m})$ is fibered by Stallings' fibering theorem.  
\end{proof}

\begin{remark} With the aid of Theorem 4.1, one can prove Theorem 4.2 of \cite{E-N} 
by induction on the number of splice sum components. 
\end{remark}

\vspace{.2in}
\begin{remark} Theorem 4.1 is not true without the hypothesis that $\partial N(S')$ and $\partial N(S'')$ 
are incompressible.  Let $(\Sigma, K)$ be any fibered link.  Choose a closed ball  $B\subset \Sigma-K$ 
and let $S$ be any smooth, oriented, simple, closed curve in $B$. 
Then $(\Sigma, K)$ is the splice sum of $(\Sigma, K\cup S)$ and the unknot $(S^3,S^1)$. 
The link $(\Sigma, K\cup S)$ is not fibered, since it is reducible. 
\end{remark}

\vspace{.2in}
The hypothesis that $\partial N(S')$ and $\partial N(S'')$ are incompressible in Theorem 4.1 is mild 
because of the next theorem. 

\begin{theorem} 
Let ${\bf L}$ be an irreducible link.  
Then {\bf L} has a component  $S$ such that $\partial N(S)$ is compressible in the exterior of ${\bf L}$ 
if and only if ${\bf L}$ is equivalent to the unknot $(S^3,S^1)$. 
\end{theorem}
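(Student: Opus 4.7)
The ``if'' direction is immediate: when $\mathbf{L}=(S^3,S^1)$ the link exterior is a solid torus $D^2\times S^1$, and any disk $D^2\times\{*\}$ compresses the boundary torus.

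For the ``only if'' direction, let $\mathbf{L}=(\Sigma,S_1\cup\cdots\cup S_n)$ and let $D\subset\Sigma_0$ be a compressing disk for $T_1=\partial N(S_1)$. First I would compress $T_1$ along $D$ and push slightly into $\mathrm{int}(\Sigma_0)$: replacing an annular neighborhood of $\partial D$ in $T_1$ by two parallel push-offs of $D$ produces a $2$-sphere $\Sigma^{\ast}$ embedded in $\mathrm{int}(\Sigma_0)$. Irreducibility of $\Sigma_0$ then yields an embedded $3$-ball $B\subset\Sigma_0$ with $\partial B=\Sigma^{\ast}$.

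The sphere $\Sigma^{\ast}$ separates $\Sigma_0$ into two regions: a region $W$ containing $T_1$ (a collar of $T_1$ together with a thickening of $D$) and the complementary region $Q$ containing $T_2,\ldots,T_n$. I would identify $W$ as the compression body from $T_1$ to $\Sigma^{\ast}$, homeomorphic to $(S^1\times D^2)\setminus\mathrm{int}(D^3)$, with $\pi_1(W)\cong\integers$; in particular $W$ is not a $3$-ball, so the ball $B$ produced by irreducibility must be $Q$. Since a $3$-ball has a single $S^2$ boundary, but $\partial Q=\Sigma^{\ast}\cup T_2\cup\cdots\cup T_n$, we are forced to have $n=1$. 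Thus $\mathbf{L}$ is a knot, and $\Sigma_0=W\cup_{\Sigma^{\ast}}Q$ is a solid torus (filling the removed ball back in).

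Finally, $\Sigma=N(S_1)\cup_{T_1}\Sigma_0$ is a union of two solid tori along their common boundary, i.e.\ a genus-one Heegaard splitting. Among $S^3$, $S^2\times S^1$, and lens spaces, only $S^3$ is a homology $3$-sphere, so $\Sigma=S^3$. Then $S_1\subset S^3$ has solid-torus exterior, hence $\pi_1(S^3\setminus S_1)\cong\integers$, and by the classical unknot characterization (Dehn's lemma and Papakyriakopoulos) $S_1$ is the standard unknot, giving $\mathbf{L}\cong(S^3,S^1)$. The main technical point is identifying $W$ as the stated compression body, which requires a careful local analysis at $D$ and is independent of the slope of $\partial D$ on $T_1$; everything else assembles standard $3$-manifold results.
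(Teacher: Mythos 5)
Your proof is correct and follows essentially the same route as the paper: surger the boundary torus along the compressing disk to obtain a $2$-sphere, use irreducibility to cap it with a ball, conclude that the exterior is a solid torus with a single boundary component, recognize $\Sigma = N(S)\cup\Sigma_0$ as a genus-one Heegaard splitting of a homology sphere (hence $S^3$), and finish with the Unknotting Theorem. The only real divergence is that the paper first pins down the slope of $\partial D$ (showing $\partial D\sim \pm L$, so $S$ bounds a disk in $\Sigma$), a step your argument correctly shows is not needed once one appeals to $\pi_1(S^3-S)\cong\integers$.
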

\begin{proof}
Let $T = \partial N(S)$ and suppose $T$ is compressible in the exterior $\Sigma_0$ of ${\bf L}$. 
Then there is a 2-disk $D$ in $\Sigma_0$ such that $D\cap T = \partial D$ and $\partial D$ 
is not contractible in $T$.  Let $M$ and $L$ be a standard meridian and longitude of $S$ on $T$. 
Then there are relatively prime integers $p$ and $q$ such that 
$\partial D$ is homologous to  $pM+qL$ in $T$.  
The curve $\partial D$ is null homologous in $\Sigma_0$. 
Hence $\ell (\partial D, L) = 0$.  Therefore $p = 0$. 
Hence $q = \pm 1$.  
Therefore there is an annulus in $N(S)$ joining $\partial D$ to $S$, 
and so $S$ bounds a disk in $\Sigma = \Sigma_0\cup N(S)$. 

By Lemma 2.1 of Hempel \cite{H}, we have that $D$ is 2-sided in $\Sigma_0$. 
Hence there is an embedding $h: D\times [-1,1] \to \Sigma_0$ such that $h(x,0) = x$ for all $x\in D$ 
and $h(D\times [-1,1])\cap T = h(\partial D \times [-1,1])$. 
Let $P = h(D\times [-1,1])$.  
Then $\partial P$ is the union of the disk $D_{-1} = h(D\times \{-1\})$, the disk $D_1 = h(D\times \{1\})$, and the annulus $W = h(\partial D \times [-1,1])$ . 
Now $D_{-1}\cup D_1\cup (T^2-{\rm int}\, W)$ is a 2-sphere $F$. 
As $\Sigma_0$ is irreducible, $F$ bounds a closed ball $B$ in $\Sigma_0$. 
We have $\Sigma_0 = B\cup P$ with $B\cap P =  D_{-1}\cup D_1$.  
Hence $\Sigma_0$ is a solid torus, and so $S$ is the only component of ${\bf L}$. 

Now $\Sigma = \Sigma_0\cup N(S)$ is a genus one Heegaard splitting 
of $\Sigma$, and therefore $\Sigma$ is a 3-sphere, since the only homology 3-spheres, with Heegaard genus one, are 3-spheres \cite{H}, pp. 20-21.  
The knot ${\bf L}$ is equivalent to the unknot $(S^3,S^1)$ 
by the Unknotting Theorem \cite{Rolfsen}, p. 103. 
\end{proof}

\section{Cable Knots} 

Let $p$ and $q$ be relatively prime integers, 
and let $(\Sigma, S)$ be a knot. 
We denote by $S(p,q)$ the unique (up to isotopy), oriented, simple, closed curve in $\partial N(S)$ 
that is homologous to $pM+qL$ in $\partial N(S)$. 
The $(p,q)$-{\it cable} of $(\Sigma, S)$ is the knot $(\Sigma, S(p,q))$. 
Let $S_1$ and $S_2$  linked circles in $S^3$. 
For an illustration of $S_1\cup S_2$, see p. 22 of \cite{E-N}. 
By Proposition 1.1 of \cite{E-N}, we have that $(\Sigma, S(p,q))$ is the splice sum 
of $(\Sigma, S)$ and $(S^3, S_1\cup S_2(p,q))$ along $S$ and $S_1$. 

\begin{theorem} 
Let $p$ and $q$ be relatively prime integers such that $q\neq 0$, 
let $(\Sigma,S)$ be a knot, 
and let $(\Sigma, S(p,q))$ be the $(p,q)$-cable of $(\Sigma,S)$. 
Then $(\Sigma, S(p,q))$ is fibered if and only if $(\Sigma, S)$ is fibered. 
\end{theorem}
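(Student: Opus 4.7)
The strategy is to apply Theorem 4.1 to the splice decomposition of the cable. By Proposition 1.1 of \cite{E-N}, $(\Sigma, S(p,q))$ is the splice sum of $(\Sigma, S)$ and $(S^3, S_1 \cup S_2(p,q))$ along $S$ and $S_1$, with the multilink structure on the companion $(S^3, S_1 \cup S_2(p,q))$ inherited from multiplicity $1$ on the cable.

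First I would dispose of two degenerate cases. If $(\Sigma, S)$ is equivalent to the unknot $(S^3, S^1)$, then $(\Sigma, S(p,q))$ is a $(p,q)$-torus knot, which is fibered for all coprime $p, q$ with $q \neq 0$, so both sides of the biconditional hold. If $(\Sigma, S)$ is reducible in the sense of \S 4, then the cable $(\Sigma, S(p,q))$ is reducible as well, since a reducing $2$-sphere can be taken disjoint from the tubular neighborhood of $S$ and hence from $S(p,q)$; by Lemma 2.1 of \cite{J}, neither link is then fibered and the biconditional holds vacuously. This reduces us to the case where $(\Sigma, S)$ is irreducible and not the unknot.

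In this case I would verify the hypotheses of Theorem 4.1. The torus $\partial N(S)$ is incompressible in the exterior of $(\Sigma, S)$ by Theorem 4.2. For the companion $(S^3, S_1 \cup S_2(p,q))$, the two components have linking number $\ell(S_1, S_2(p,q)) = q \neq 0$, so the link is not split and hence irreducible; since it has two components, Theorem 4.2 forces $\partial N(S_1)$ to be incompressible in its exterior. To collapse the biconditional provided by Theorem 4.1, I also need $(S^3, S_1 \cup S_2(p,q))$ with its induced multilink structure to be fibered; this holds because its exterior is the union of the Hopf-link exterior $T^2 \times I$ with the cable space $N(S_2) - {\rm int}\, N(S_2(p,q))$, a manifold that admits a standard Seifert fibration over $S^1$ realizing the splice-induced cohomology class.

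Once these three facts are in place, Theorem 4.1 gives that $(\Sigma, S(p,q))$ is fibered if and only if both $(\Sigma, S)$ and the companion are fibered; since the latter is automatic, the theorem follows. I expect the main technical obstacle to be the last verification: one must pin down the splice-induced multiplicities on $S_1$ and $S_2(p,q)$ and check that the Seifert fibration of the cable-space piece of the companion's exterior lines up with the prescribed cohomology class. Everything else is a direct invocation of earlier results in the paper.
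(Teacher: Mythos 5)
Your strategy is genuinely different from the paper's. The paper gives a direct proof: after disposing of $p=0$, it splits the cable exterior along the annulus $F=\partial N(S)-{\rm int}\,N(S(p,q))$ into a copy $E$ of $\Sigma-{\rm int}\,N(S)$ and a solid torus $V$, obtains $G=A\ast_CB$ with $C$ infinite cyclic, computes $G_{ab}\cong\integers$, and transfers fiberedness between $[G,G]$ and $[A,A]$ by the argument of Theorem 3.2 together with Stallings' theorem. Your route --- Proposition 1.1 of \cite{E-N} plus Theorem 4.1 --- is precisely the one the paper mentions and then declines to carry out (``it is easier to give a direct proof''). Your reduction to the irreducible, non-unknot case is sound, and the incompressibility hypotheses are correctly verified: $\ell(S_1,S_2(p,q))=q\neq 0$ forces the two-component companion link to be non-split, hence irreducible, and Theorem 4.2 then rules out compressible boundary components on both sides.

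As written, though, the argument has two gaps. First, Theorem 4.1 is a statement about multilinks, and the class ${\bf m}$ of the cable (multiplicity $1$ on $S(p,q)$) restricts on the $(\Sigma,S)$ side to the multilink $(\Sigma,S)(q)$, not to $(\Sigma,S)=(\Sigma,S)(1)$: the meridian $M$ of $S$ satisfies ${\bf m}(M)=\ell(M,S(p,q))=q$. So Theorem 4.1 yields ``cable fibered iff $(\Sigma,S)(q)$ and the companion multilink are fibered,'' and you still owe the equivalence of fiberedness for $(\Sigma,S)(q)$ and $(\Sigma,S)$ --- true for $q\neq 0$ (compose a fibration with the degree-$q$ covering of $S^1$ in one direction; apply Stallings to the common kernel in the other), but not addressed. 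Second, the assertion that the companion multilink, with its splice-induced multiplicities (which one computes to be $p$ on $S_1$ and $1$ on $S_2(p,q)$), is fibered is exactly where the content of the theorem sits in your approach, and it is left unproved; ``admits a standard Seifert fibration over $S^1$'' is not yet an argument (a Seifert fibration is a circle fibration over a $2$-orbifold, not a surface fibration over $S^1$), and one must actually exhibit a surface fibration of the cable-space exterior realizing the class $(p,1)$, including the case $p=0$ where the exterior degenerates to $T^2\times I$. Both gaps are fillable, but until they are, the proof is incomplete; the paper's direct decomposition avoids both issues, which is presumably why the author chose it.
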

\begin{proof}
This theorem follows from Theorem 4.2, but 
it is easier to give a direct proof. 
First assume that $p=0$.  Then $q=\pm 1$, and the result is clear, 
since $(\Sigma,S)$ is equivalent to $(\Sigma, S(0,1))$. 
Hence, we may assume that $p\neq 0$.

Choose $N(S(p,q))\subset \Sigma -S$ so that $N(S(p,q))$ intersects $\partial N(S)$ 
in a closed regular neighborhood of $S(p,q)$ in $\partial N(S)$. 
Then $N(S(p,q))\cap \partial N(S)$ is an annulus with core the curve $S(p,q)$. 
Therefore $\partial N(S) - {\rm int}\, N(S(p,q))$ is an annulus in $\partial N(S)$ 
that is parallel to $S(p,q)$. 
Now $N(S) -{\rm int}\, N(S(p,q))$, $N(S)$, and $N(S)\cup N(S(p,q))$ are solid tori 
with core the curve $S$,  
Hence $N(S)\cup N(S(p,q))$ is a closed regular neighborhood of $S$ in $\Sigma$. 
Therefore $E = \Sigma -{\rm int}\,(N(S)\cup N(S(p,q)))$ is homeomorphic to $\Sigma-{\rm int}\,N(S)$. 
Observe that $\Sigma - {\rm int}\,N(S(p,q))$ is the union of 
$E = \Sigma -{\rm int}\,(N(S)\cup N(S(p,q)))$
and the solid torus $V =N(S) -{\rm int}\, N(S(p,q))$ 
intersecting along the annulus $F = \partial N(S) - {\rm int}\, N(S(p,q))$. 

Let $G= \pi_1(\Sigma - {\rm int}\,N(S(p,q))$, let $A$ be the image of $\pi_1(E)$
in $G$, and let $B$ be the image of $\pi_1(V)$ in $G$, 
and let $C$ be the image of $\pi_1(F)$ in $G$.  
As $pq\neq 0$, we have that $\pi_1(F)$ injects into both  $\pi_1(E)$ and $\pi_1(F)$. 
Then $\pi_1(E)$ injects onto $A$, and $\pi_1(F)$ injects onto $B$, 
and $G=A\ast_C B$ by van Kampen's theorem. 
The group $C$ is infinite cyclic with generator represented by the core curve of $F$. 
This curve is homologous to $pM$ in $\Sigma - {\rm int}\,N(S)$ and to $qL$ in $V$.  
From the Mayer-Vietoris exact sequence
$$H_1(F) \to H_1(E)\oplus H_1(V) \to H_1(\Sigma-{\rm int}\,N(S(p,q))) \to 0,$$
we deduce that 
$G_{ab} \cong \integers\oplus\integers/\langle (p,-q)\rangle \cong \integers.$
By the same argument as in the proof of Theorem 3.2, we deduce that 
$[G,G]$ is a free group of finite rank if and only if $[A,A]$ is a free group of finite rank. 

The solid torus $V$ is irreducible. 
The annulus $F$ is incompressible in $E$ and in $V$ by Lemma 1.1.3 of \cite{W}.  
By Lemma 1.1.4  of  \cite{W}, we have that $\Sigma-{\rm int}\,N(S(p,q))$ is irreducible 
if and only $E$ is irreducible if and only if $\Sigma-{\rm  int}\,N(S)$ is irreducible. 
A compact 3-manifold fibered over $S^1$, with fiber not a 2-sphere, is irreducible by Lemma  2.1 of \cite{J}. 
Therefore $(\Sigma, S(p,q))$ is fibered if and only if $(\Sigma, S)$ is fibered 
by Stallings' fibering theorem \cite{S}. 
\end{proof}

\begin{remark} The ``\,if " part of Theorem 5.1 was first proved by J. Simon \cite{Simon} 
by a more direct argument using the splice sum decomposition of $(\Sigma, S(p,q))$. 
In \cite{Simon}, Simon remarked that this result could also be proved using 
Stallings' fibering theorem. 
\end{remark}

\vspace{.2in}
\begin{example} Let $(S^3,K)$ be the trefoil knot. 
Then $(S^3,K)$ is the $(2,3)$-cable of the unknot $(S^3,S^1)$. 
Hence $(S^3,K)$ is fibered by Theorem 5.1. 
Let $(\Sigma, S)$ be the knot obtained from the trefoil knot 
by Dehn surgery with coefficient $-1/2$. 
Then $\Sigma$ is the Brieskorn homology 3-sphere $\Sigma(2,3,11)$  
according to R. Fintushel and R. Stern \cite{F-S}. 
We have that $\Sigma-{\rm int}\,N(S) = S^3-{\rm int}\,N(K)$, 
and so $(\Sigma, S)$ is fibered. 
Hence $(\Sigma, S(1,2))$ is fibered  by Theorem 5.1. 
In \cite{R}, it is proved that 
$$\pi_1(\Sigma-S(1,2)) \cong \langle x,y; x^2y^2x^2y^{-1}\rangle,$$ 
and  $\langle x,y; x^2y^2x^2y^{-1}\rangle$ is not isomorphic to the group 
of any knot in $S^3$. 


\end{example}

\end{document}